\documentclass[12pt]{amsart}
\usepackage{amsmath}
\usepackage{amsthm}
\usepackage{amssymb}
\usepackage{amsfonts}
\usepackage{bm}
\usepackage[shortlabels]{enumitem}
\usepackage[bookmarks=true,hyperindex,pdftex,colorlinks,citecolor=red,linkcolor=blue]{hyperref}
\usepackage{centernot}
\usepackage{marginnote}
\usepackage{bbm}
\usepackage[all]{xy}
\usepackage{tikz}
\usepackage{comment}
\usetikzlibrary{arrows}
\usetikzlibrary{shapes,decorations}
\usetikzlibrary{positioning}
\usepackage{xcolor}
\usepackage{bigints}
\usepackage{cancel}
\usepackage[
a4paper,
left=2.4cm,
right=2.4cm,
top=2.3cm,
bottom=2.5cm,
headheight=14pt,
includeheadfoot
]{geometry}
\usepackage{doi}

\theoremstyle{plain}
\newtheorem{thm}{Theorem}

\newtheorem{lemma}{Lemma}

\theoremstyle{definition}

\theoremstyle{remark}

\newcommand{\C}{\mathbb{C}}

\newcommand{\R}{\mathbb{R}}
\newcommand{\Z}{\mathbb{Z}}
\newcommand{\T}{\mathbb{T}}

\newcommand{\eps}{\varepsilon}
\newcommand{\norm}[1]{\left\Vert #1\right\Vert}

\definecolor{darkgreen}{rgb}{.2,.6,.2}
\title{Uniform Kreiss boundedness does not imply strong Ces\`aro boundedness}

\author[L. Arnold]{Loris Arnold}

\address[L. Arnold]{Normandie Univ, UNICAEN, CNRS, LMNO, 14000 Caen, France}
\email{lfj.arld@gmail.com}

\subjclass[2020]{47A35, 47A10}
\keywords{Uniform Kreiss boundedness, strong Ces\`aro boundedness}

\begin{document}
	
	\begin{abstract}
		We construct a uniformly Kreiss bounded operator on a separable Hilbert space
		which is not strongly Ces\`aro bounded. This gives a negative answer to a
		question of Cohen, Cuny, Eisner and Lin.
	\end{abstract}
	
	\maketitle
	
	\section{Introduction}
	
	The notion of uniform Kreiss boundedness is a natural intermediate condition between power boundedness and the ordinary Kreiss resolvent condition.
	Let $X$ be a complex Banach space, let $B(X)$ denote the algebra of bounded
	linear operators on $X$ and let $T\in B(X)$. The operator $T$ is
	\emph{uniformly Kreiss bounded} if there exists $C>0$ such that
	\[
	\left\|
	\sum_{k=0}^{n}\lambda^{-k-1}T^k
	\right\|
	\leq \frac{C}{|\lambda|-1},
	\qquad n\geq0,\quad |\lambda|>1.
	\]
	Montes-Rodr\'{\i}guez, S\'anchez-\'Alvarez and Zem\'anek
	\cite[Corollary~3.2]{MontesSanchezZemanek} showed that uniform Kreiss
	boundedness can equivalently be formulated in terms of rotated Ces\`aro
	means:
	\begin{equation}\label{eq:UKB-characterization}
		\sup_{m\geq1}
		\sup_{\zeta\in\T}
		\left\|
		\frac1m\sum_{k=1}^{m}(\zeta T)^k
		\right\|
		<\infty.
	\end{equation}

	More recently, Cohen, Cuny, Eisner and Lin
	\cite{CohenCunyEisnerLin} introduced \emph{strong Ces\`aro boundedness}:
	$T$ is strongly Ces\`aro bounded if there exists $C>0$ such that
	\[
	\frac1N\sum_{k=0}^{N-1}
	\bigl|\langle x^*,T^kx\rangle\bigr|
	\leq C\norm{x}\norm{x^*},
	\qquad x\in X,\quad x^*\in X^*,\quad N\geq1.
	\]
	By \cite[Proposition~3.6]{CohenCunyEisnerLin}, this is equivalent to
	\begin{equation}\label{eq:SCB-characterization}
		\sup_{N\geq1}
		\sup_{\gamma_0,\ldots,\gamma_{N-1}\in\T}
		\left\|
		\frac1N\sum_{k=0}^{N-1}\gamma_kT^k
		\right\|
		<\infty.
	\end{equation}
	In particular, strong Ces\`aro boundedness implies uniform Kreiss boundedness. 
	
	Uniform Kreiss boundedness has been studied in connection with power growth,
	ergodic properties and Ces\`aro boundedness, see, among others,
	\cite{AlemanSuciu,BermudezBonillaMullerPeris,BonillaMuller}.
	In particular, Bonilla and M\"uller showed that, for every $\eps>0$, there
	exists a uniformly Kreiss bounded operator on a Hilbert space such that
	\[
	\norm{T^n}\asymp (n+1)^{1-\eps}.
	\]
	Interestingly, these examples are nevertheless strongly Ces\`aro bounded,
	and therefore do not separate uniform Kreiss boundedness from strong
	Ces\`aro boundedness.
	
	Since strong Ces\`aro boundedness implies uniform Kreiss boundedness, Cohen,
	Cuny, Eisner and Lin asked whether the converse can fail, preferably on a
	Hilbert space, see \cite[Section~6, Problem~3]{CohenCunyEisnerLin}. We show
	that it does, even on a separable Hilbert space.
	
	\begin{thm}\label{thm:main}
		There exists a uniformly Kreiss bounded operator on a separable Hilbert space
		that is not strongly Ces\`aro bounded.
	\end{thm}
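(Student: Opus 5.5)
We will build $T$ as an orthogonal direct sum $T=\bigoplus_{j} T_j$ of operators on finite-dimensional Hilbert spaces $H_j$. Both conditions are uniform over the summands. By the characterization \eqref{eq:UKB-characterization}, it suffices that
\[
\sup_j\sup_m\sup_{\zeta\in\T}\Bigl\|\tfrac1m\sum_{k=1}^m(\zeta T_j)^k\Bigr\|\le C .
\]
By \eqref{eq:SCB-characterization}, it suffices that for each $j$ there are $N_j$ and unimodular $\gamma_k$ with
\[
\Bigl\|\tfrac1{N_j}\sum_{k<N_j}\gamma_kT_j^k\Bigr\|\ge j .
\]
The whole problem is therefore a quantitative finite-dimensional one. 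We need operators whose rotated Ces\`aro means are uniformly bounded, but whose Ces\`aro means with arbitrary phases $\gamma_k$ are large. The mechanism to exploit is this. A single rotation $\zeta^k$ is a very structured phase sequence. Arbitrary $\gamma_k$ can instead be chosen as $\gamma_k=\overline{\langle T^k e,f\rangle}/|\langle T^k e,f\rangle|$, which turns the average into $\frac1N\sum_k|\langle T^ke,f\rangle|$. So we want matrix coefficients $\langle T^ke,f\rangle$ that are individually of size $\gtrsim j$ on a large proportion of $k\le N$, while oscillating in a "non-rotational" way. Exponential sums $\sum_k\zeta^k a_k$ should then exhibit cancellation uniformly in $\zeta$.

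Concretely, I would take $T_j$ to be a weighted shift (or a shift composed with a diagonal unitary) on $\ell^2_{n}$ of the form $T e_i=w_i e_{i+1}$. Then $T^k e_0=(w_0\cdots w_{k-1})e_k$. A plain weighted shift has nonnegative coefficients, so rotated means already detect growth. Instead, I would twist by a diagonal unitary $D=\mathrm{diag}(e^{i\theta_i})$ with quadratic phases $\theta_i=\pi i^2/n$ (a chirp), or use a Rudin–Shapiro-type $\pm1$ sign pattern. The goal is that $\langle T^k x,y\rangle$ becomes a Gauss-sum-like expression. Gauss/Weyl sums $\sum_k e^{i\pi k^2/n}\zeta^k$ are uniformly $O(\sqrt n)$ in $\zeta$, while $\sum_k|\cdot|$ is of order $n$. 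This $\sqrt n$ versus $n$ gap is the separation we need. So the plan is:

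(1) Choose vectors and a normalization of $T_j$ so that $\|T_j^k\|$ grows like $\sqrt{N}$-scale at times $k\le N$.

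(2) Verify that the rotated means $\frac1m\sum_{k\le m}\zeta^kT_j^k$ have norm $O(1)$, uniformly in $m$ and $\zeta$. The chirp phases produce square-root cancellation in every matrix entry, and a Schur test, or a $TT^*$ argument using the estimate $|\sum_{k\in I}e^{i\pi k^2/n}\zeta^k|\lesssim\sqrt n$ for intervals $I$, controls the norm.

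(3) Compute a lower bound for $\frac1N\sum_k|\langle T_j^ke,f\rangle|\gtrsim j$ using adapted phases. Letting $j\to\infty$ with $\|T\|$ bounded then completes the proof.

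I expect step (2) to be the main obstacle. We need an operator-norm bound on every rotated partial Ces\`aro mean, not merely on individual matrix coefficients. Worse, the Gauss sum bound must hold for partial sums over arbitrary intervals $[1,m]$, where incomplete Gauss sums are only bounded by $O(\sqrt n\log n)$. To handle this, I would first try to realize $T_j$ as a compression of a unitary-like chirp operator on $L^2$ of an interval, of the form $f(x)\mapsto e^{i\pi x^2}$-modulation composed with a translation. The rotated means then become oscillatory integral operators, and the uniform bound would follow from a van der Corput/Fourier argument applied to the kernel rather than to entries. If the logarithmic losses in the incomplete sums cannot be removed, I would repeat the construction with gaps: use block-lacunary scales, and absorb the $\log$ by letting the growth parameter be $\sqrt{n}/\log n$. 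This is still unbounded, which is all step (3) needs.
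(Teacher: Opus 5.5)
There is a genuine gap. The reduction to a norm-bounded family of finite-dimensional blocks is fine, and the paper does the same. The blocks you propose, however, cannot work for any choice of weights, chirp or sign pattern.

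You noted that a plain weighted shift fails because its coefficients are nonnegative. The diagonal twist does not change this. $DS_w$ and $S_wD$ are again weighted shifts $Te_i=w_ie_{i+1}$, now with complex weights. With $U=\operatorname{diag}(u_i)$, $u_0=1$, $u_{i+1}=u_iw_i/|w_i|$, one gets $U^*TU=S$, the shift with weights $|w_i|\geq0$. So the chirp only changes $\langle T^ke,f\rangle$ for your particular test vectors. The norms in \eqref{eq:UKB-characterization} and \eqref{eq:SCB-characterization} take a supremum over all vectors and do not see it.

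Since $S$ is entrywise nonnegative, $|\sum_{k<N}\gamma_kS^kx|\leq\sum_{k<N}S^k|x|$ coordinatewise for unimodular $\gamma_k$. Hence
\[
\Bigl\|\frac1N\sum_{k=0}^{N-1}\gamma_kT^k\Bigr\|
\leq\Bigl\|\frac1N\sum_{k=0}^{N-1}T^k\Bigr\|
\leq 1+\sup_{m\geq1}\Bigl\|\frac1m\sum_{k=1}^{m}T^k\Bigr\|.
\]
Your step (2), already at $\zeta=1$, therefore forces $\frac1N\sum_k|\langle T_j^ke,f\rangle|\leq(1+C)\norm{e}\norm{f}$. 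This contradicts step (3).

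The same argument rules out your fallbacks:
\begin{itemize}
\item The cyclic shift on $\Z_n$ is unitarily equivalent to $\lambda S$ with $S$ entrywise nonnegative and $|\lambda|=1$; take $\zeta=\bar\lambda$.
\item The operator $f\mapsto e^{i\pi x^2}f(x-1)$ compressed to an interval becomes a positivity-preserving truncated translation after conjugating by a unimodular multiplier $u$ with $u(x)=e^{i\pi x^2}u(x-1)$.
\end{itemize}
In general, uniform Kreiss boundedness implies strong Ces\`aro boundedness for every operator unitarily equivalent to a unimodular multiple of a positivity-preserving operator on an $L^2$ space. So the obstacle is not the logarithmic loss in incomplete Gauss sums but the entire class you chose.

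What is missing is a genuinely non-positive mechanism, in which arbitrary phases can build an operator that no rotation can. In the paper, $T_n$ is diagonal, $T_nz_{n,j}=e^{i\theta_{n,j}}z_{n,j}$. The basis of $\C^{2n}$ is conditional: its partial-sum projections are uniformly bounded, but the alternating-sign multiplier has norm $\geq\frac12n^\alpha$ with $\alpha>\frac12$. Every $\sum_kb_kT_n^k$ is then a multiplier $\Delta_{n,\mathbf a}$ with $a_j=\sum_kb_ke^{ik\theta_{n,j}}$.
\begin{itemize}
\item For rotated Ces\`aro means, the symbol $a_j=F_m(\varphi+\theta_{n,j})$ has variation in $j$ bounded uniformly in $m,\varphi,n$, because the angles are lacunary (Lemma~\ref{lem:lacunary-variation}). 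Summation by parts then bounds these means by $1+K_\alpha V_q$.
\item For arbitrary phases, rational independence of the angles and Weyl equidistribution realize, up to small error, the symbol $a_j=c_{2n}(-1)^j$ with $c_{2n}\geq(2\sqrt n)^{-1}$ (Lemma~\ref{lem:phase-selection}). The resulting norm is $\gtrsim n^{\alpha-1/2}\to\infty$.
\end{itemize}
The separation thus comes from conditionality of the eigenbasis: bounded-variation symbols are harmless, while sign patterns are not. It does not come from oscillation of individual matrix coefficients, which is all a chirped shift can provide.
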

	
	\section{The counterexample}
	
	The construction relies on finite-dimensional bases recently obtained by
	Lorist, Meyries and Veraar \cite[Proposition~2.1]{LoristMeyriesVeraar}.
	For every $0<\alpha<1$ and every $n\geq1$, there exists a basis
	$(z_{n,j})_{j=1}^{2n}$ of $H_n:=\C^{2n}$ such that the partial-sum
	projections
	\[
	P_{n,r}\left(\sum_{j=1}^{2n}c_jz_{n,j}\right)
	:=
	\sum_{j=1}^{r}c_jz_{n,j},
	\qquad 1\leq r\leq2n,
	\]
	satisfy
	\begin{equation}\label{eq:partial-sums}
		\sup_{n\geq1}\sup_{1\leq r\leq2n}\norm{P_{n,r}}
		\leq K_\alpha,
		\qquad
		K_\alpha:=\frac{5}{1-\alpha}.
	\end{equation}
	For $\mathbf{a}=(a_j)_{j=1}^{2n}\in\C^{2n}$, define
	\[
	\Delta_{n,\mathbf{a}}z_{n,j}:=a_jz_{n,j},
	\qquad 1\leq j\leq2n.
	\]
	Then
	\begin{equation}\label{eq:general-multiplier}
		\norm{\Delta_{n,\mathbf{a}}}
		\leq
		4n^\alpha\max_{1\leq j\leq2n}|a_j|.
	\end{equation}
	Moreover, the alternating-sign multiplier
	\[
	\Delta_{n,\pm}z_{n,j}:=(-1)^jz_{n,j}
	\]
	satisfies
	\begin{equation}\label{eq:alternating-multiplier}
		\norm{\Delta_{n,\pm}}
		\geq\frac12n^\alpha.
	\end{equation}
	Finally, summation by parts gives
	\begin{equation}\label{eq:summation-by-parts}
		\Delta_{n,\mathbf{a}}
		=
		a_{2n}I+
		\sum_{j=1}^{2n-1}(a_j-a_{j+1})P_{n,j}.
	\end{equation}
	
	We shall also use two elementary lemmas. Their proofs are postponed to the
	appendix in order to keep the construction transparent. The first is the
	analytic ingredient responsible for uniform Kreiss boundedness.
	
	\begin{lemma}\label{lem:lacunary-variation}
		Let $0<q<1$. There exists $V_q>0$ such that, whenever $m,N\geq1$ and
		\[
		0<\theta_N<\cdots<\theta_2<\theta_1\leq\frac{\pi}{4},
		\qquad
		\theta_{j+1}\leq q\theta_j
		\quad(1\leq j<N),
		\]
		one has
		\begin{equation}\label{eq:lacunary-variation}
			\sup_{\varphi\in\R}
			\sum_{j=1}^{N-1}
			\left|
			F_m(\varphi+\theta_{j+1})
			-F_m(\varphi+\theta_j)
			\right|
			\leq V_q,
		\end{equation}
		where
		\[
		F_m(x):=\frac1m\sum_{k=1}^{m}e^{ikx}.
		\]
	\end{lemma}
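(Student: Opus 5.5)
The plan is to bound each increment $|F_m(\varphi+\theta_{j+1})-F_m(\varphi+\theta_j)|$ in two different ways. Which bound to use depends on how far the interval $I_j:=[\varphi+\theta_{j+1},\varphi+\theta_j]$ is from $2\pi\Z$. Lacunarity then turns the resulting sum into two geometric series plus a bounded number of exceptional terms. Throughout, write $\|x\|$ for the distance from $x$ to $2\pi\Z$.

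\textbf{Step 1: pointwise estimates for $F_m$.} From $F_m(x)=\frac{e^{ix}(e^{imx}-1)}{m(e^{ix}-1)}$ and $|e^{ix}-1|\asymp\|x\|$ we get
\[
|F_m(x)|\le\min\Bigl(1,\frac{C}{m\|x\|}\Bigr).
\]
Differentiating the same closed form (or summing by parts in $\frac1m\sum ik e^{ikx}$) gives $|F_m'(x)|\le C\bigl(\|x\|^{-1}+m^{-1}\|x\|^{-2}\bigr)$. Combined with the trivial bound $|F_m'|\le m$, this yields
\[
|F_m'(x)|\le C\min\bigl(m,\|x\|^{-1}\bigr).
\]
Consequently, if $d_j:=\min_{x\in I_j}\|x\|$, each increment satisfies both
\[
\text{(a)}\ \ |{\cdot}|\le \theta_j\,C\min\bigl(m,d_j^{-1}\bigr),
\qquad
\text{(b)}\ \ |{\cdot}|\le 2\min\Bigl(1,\frac{C}{m d_j}\Bigr),
\]
where (a) uses $|I_j|\le\theta_j$.

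\textbf{Step 2: localisation and case split.} All the intervals $I_j$ lie in $[\varphi,\varphi+\pi/4]$, a window of length $\pi/4$. So at most one point $x_0\in2\pi\Z$ is within distance $\pi$ of the window, and $\|x\|=|x-x_0|$ for every relevant $x$. Put $u:=x_0-\varphi$. The indices $j$ split into three groups.
\begin{enumerate}[(i)]
\item $\theta_j\le|u|/2$. Then $d_j\ge|u|/2$, and (a) gives a term $\le C\theta_j/|u|$. The $\theta_j$ in this group decrease geometrically from a value $\le |u|/2$, so this part of the sum is at most $C/(1-q)$.
\item $\theta_{j+1}\ge2|u|$ (this covers every $j$ when $u\le 0$, up to harmless adjustments). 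Then $d_j\ge\theta_{j+1}-|u|\ge\theta_{j+1}/2\ge q\theta_j/2$. Combining (a) (using the factor $m$) and (b), the term is
\[
\le C_q\min\Bigl(m\theta_j,\frac{1}{m\theta_j}\Bigr).
\]
Since the $\theta_j$ are lacunary, the numbers $m\theta_j$ form a sequence with ratio $\le q$. The sum of $\min(t,1/t)$ over such a sequence is bounded by twice a geometric series on each side of $t=1$, hence by a constant depending only on $q$.
\item The remaining indices, where $\theta_j>|u|/2$ and $\theta_{j+1}<2|u|$. Lacunarity bounds their number by $1+\log_{1/q}4$, and each term is at most $2$ because $|F_m|\le1$.
\end{enumerate}
Adding the three contributions gives a bound $V_q$ independent of $m$, $N$ and $\varphi$.

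\textbf{Expected obstacle.} The delicate point is the derivative bound $|F_m'(x)|\lesssim 1/\|x\|$ for $m\|x\|\ge1$, together with the observation that (a) alone is \emph{not} enough in group (ii). There the ratio $\theta_j/d_j$ is only bounded, not summable, so one must interpolate with the decay $1/(m\|x\|)$ from (b) to make the series converge. Getting the bookkeeping at the boundaries between the groups right (the case $u\le0$, and indices where $\theta_{j+1}$ and $\theta_j$ fall into different regimes) is routine but needs care.
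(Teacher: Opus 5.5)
Your architecture is essentially the paper's. Group (i) is the tail estimate \eqref{eq:tail-variation}: $|F_m'|\lesssim 1/\|x\|$ together with $\sum\theta_j\lesssim|u|$. Group (ii) is the separated estimate \eqref{eq:separated-variation}: the decay $|F_m|\lesssim 1/(m\|x\|)$ combined with $|F_m'|\le m$. Group (iii) collects $O_q(1)$ leftover increments, each bounded by $2$. Step 1 and groups (i) and (iii) are fine.

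\textbf{The gap.} Group (ii) rests on a reversed inequality. You write $\theta_{j+1}/2\ge q\theta_j/2$, but the hypothesis is $\theta_{j+1}\le q\theta_j$, and nothing bounds $\theta_{j+1}/\theta_j$ from below. As a result, the termwise bound $C_q\min(m\theta_j,1/(m\theta_j))$ is false. Take $\varphi=0$, $\theta_j=m^{-1/2}$ and $\theta_{j+1}=m^{-2}$, which is admissible once $m^{3/2}\ge 1/q$. Then $|F_m(\theta_{j+1})-1|\le 1/m$ and $|F_m(\theta_j)|\le \pi m^{-1/2}$. So this increment tends to $1$, while your bound is $C_q m^{-1/2}$. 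For the same reason, your remark under ``Expected obstacle'' that $\theta_j/d_j$ is bounded in group (ii) is wrong: for $\varphi=0$ one has $d_j=\theta_{j+1}$, so $\theta_j/d_j=\theta_j/\theta_{j+1}$ is unbounded.

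\textbf{The repair.} It is short, and it is exactly the paper's proof of \eqref{eq:separated-variation}.
\begin{enumerate}[(1)]
\item What (a) with the factor $m$ and (b) with $d_j\ge\theta_{j+1}/2$ actually give is a term bound $C\min\bigl(m\theta_j,\,1,\,1/(m\theta_{j+1})\bigr)$.
\item On indices with $m\theta_j\le 1$, use $m\theta_j$. On indices with $m\theta_{j+1}\ge 1$, use $1/(m\theta_{j+1})$.
\item Both are geometric series with ratio at most $q$ and largest term at most $1$, so each sums to at most $1/(1-q)$.
\item The only remaining index is the unique $j$ with $m\theta_{j+1}<1<m\theta_j$; bound that increment by $2$.
\end{enumerate}
With this change your argument goes through.

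\textbf{A minor point in Step 2.} A window of length $\pi/4$ containing an odd multiple of $\pi$ can lie within distance $\pi$ of two points of $2\pi\Z$. In that case $\|x\|\ne|x-x_0|$ for any single $x_0$. The fix is to take $x_0$ to be the point of $2\pi\Z$ nearest to $\varphi$. Every other point of $2\pi\Z$ is then at distance at least $3\pi/4$ from the window, which leaves all your lower bounds on $d_j$ intact.
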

	
	The second lemma provides unimodular coefficients for which the corresponding Ces\`aro averages approximate a scalar multiple of the alternating-sign
	multiplier.
	
	\begin{lemma}\label{lem:phase-selection}
		Let $d\geq1$ and let $\eps_1,\ldots,\eps_d\in\{-1,1\}$. There exist a
		measurable function $\Gamma:\T^d\to\T$ and a number $c_d>0$ such that
		\begin{equation}\label{eq:phase-Fourier}
			\int_{\T^d}\Gamma(w)w_j\,dm_d(w)
			=
			c_d\eps_j,
			\qquad 1\leq j\leq d,
		\end{equation}
		where $m_d$ denotes normalized Haar measure on $\T^d$. Moreover,
		\begin{equation}\label{eq:cd-lower}
			c_d\geq\frac1{\sqrt{2d}},
		\end{equation}
		and each function $w\mapsto\Gamma(w)w_j$ is Riemann integrable on $\T^d$.
	\end{lemma}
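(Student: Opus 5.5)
We construct $\Gamma$ explicitly as a sign-type function of a linear form. Set $\eps=(\eps_1,\ldots,\eps_d)$ and $S(w):=\sum_{j=1}^d\eps_j\overline{w_j}$, and define
\[
\Gamma(w):=\frac{S(w)}{|S(w)|}\quad\text{if }S(w)\neq0,\qquad \Gamma(w):=1\quad\text{otherwise}.
\]
Then $\Gamma$ is measurable with values in $\T$. The set $\{S=0\}$ is a closed set of Haar measure zero: for $d\ge2$ it is a proper real-analytic subvariety, and for $d=1$ it is empty. Off this set $\Gamma$ is continuous. Hence each $\Gamma(w)w_j$ is bounded and continuous almost everywhere, so it is Riemann integrable on $\T^d$.

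The Fourier identity \eqref{eq:phase-Fourier} follows from symmetry. Let $\sigma$ be the coordinatewise map $w_j\mapsto \eps_j w_j$; it preserves $m_d$. Since $S(\sigma w)=\sum_j \overline{w_j}$ and $\eps_j^2=1$, substituting gives
\[
\int\Gamma(w)w_j\,dm_d=\eps_j\int\Gamma_0(u)u_j\,dm_d(u),
\]
where $\Gamma_0$ is the same construction with all signs equal to $+1$. The function $\Gamma_0$ is invariant under coordinate permutations, and permutations preserve $m_d$. So the integral $c_d:=\int\Gamma_0(u)u_j\,dm_d(u)$ does not depend on $j$, which proves \eqref{eq:phase-Fourier} with this common value.

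To identify $c_d$, sum over $j$. With $S_0(u)=\sum_j\overline{u_j}$, we have $\sum_j u_j=\overline{S_0(u)}$, so
\[
d\,c_d=\int \Gamma_0(u)\,\overline{S_0(u)}\,dm_d=\int |S_0(u)|\,dm_d(u).
\]
In particular $c_d\ge0$ is real.

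The lower bound \eqref{eq:cd-lower} is the only real work, and it is a Khintchine-type estimate. The variables $\overline{u_j}$ are i.i.d. and orthonormal, so $\int|S_0|^2=d$. Expanding $|S_0|^4$, the only surviving terms are $\overline{u_a}\,\overline{u_b}\,u_c u_d$ with $\{a,b\}=\{c,d\}$ as multisets, which gives $\int|S_0|^4=2d^2-d\le 2d^2$. Hölder (log-convexity of $L^p$ norms) then gives
\[
\int|S_0|^2\le \Bigl(\int|S_0|\Bigr)^{2/3}\Bigl(\int|S_0|^4\Bigr)^{1/3},
\]
so $\int|S_0|\ge (d^3/2d^2)^{1/2}=\sqrt{d/2}$. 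Therefore $c_d\ge \frac1d\sqrt{d/2}=\frac{1}{\sqrt{2d}}$, which is exactly \eqref{eq:cd-lower}, and it shows $c_d>0$.

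The main point needing care is the count of fourth moments. Pairs with $a=b$ contribute $d$ terms. Pairs with $a\neq b$ contribute $2d(d-1)$ terms, from the two matchings. This gives $2d^2-d$, as used above.
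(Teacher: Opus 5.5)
Your proposal is correct and follows essentially the same route as the paper. It uses the same phase function $\Gamma=\overline{S}/|S|$ (your $\sum_j\eps_j\overline{w_j}$ is just $\overline{\sum_j\eps_j w_j}$), the same sign-flip and permutation symmetry to reach $d\,c_d=\int|S|\,dm_d$, and the same $L^2$--$L^4$ H\"older interpolation with $\|F\|_{L^4}^4=2d^2-d$; the only difference is that you justify $m_d(\{S=0\})=0$ via real-analyticity, whereas the paper uses the simpler Fubini observation that for fixed $w_2,\ldots,w_d$ at most one $w_1$ solves $S(w)=0$.
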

	
	\begin{proof}[Proof of Theorem~\ref{thm:main}]
		Fix
		\[
		\frac12<\alpha<1
		\]
		and choose once and for all $q\in(0,1)$, for instance $q=1/4$. For every
		$n\geq1$, let $(z_{n,j})_{j=1}^{2n}$ be a basis of $H_n=\C^{2n}$ satisfying
		\eqref{eq:partial-sums}--\eqref{eq:alternating-multiplier}.
		
		Choose numbers
		\[
		0<\theta_{n,2n}<\cdots<\theta_{n,1}<\frac{\pi}{4}
		\]
		such that
		\begin{equation}\label{eq:theta-lacunarity}
			\theta_{n,j+1}\leq q\theta_{n,j},
			\qquad 1\leq j<2n,
		\end{equation}
		and
		\begin{equation}\label{eq:rational-independence}
			1,\frac{\theta_{n,1}}{2\pi},\ldots,
			\frac{\theta_{n,2n}}{2\pi}
			\quad\text{are linearly independent over }\mathbb Q.
		\end{equation}
		Such a family can be chosen recursively. Indeed, after
		$\theta_{n,1},\ldots,\theta_{n,j}$ have been selected, it suffices to choose
		$\theta_{n,j+1}\in(0,q\theta_{n,j})$ outside
		\[
		\operatorname{span}_{\mathbb Q}
		\{2\pi,\theta_{n,1},\ldots,\theta_{n,j}\},
		\]
		which is countable.
		
		Define $T_n\in B(H_n)$ by
		\[
		T_nz_{n,j}=e^{i\theta_{n,j}}z_{n,j},
		\qquad 1\leq j\leq2n.
		\]
		Using \eqref{eq:summation-by-parts}, \eqref{eq:partial-sums} and
		$|e^{is}-e^{it}|\leq|s-t|$, we obtain
		\[
		\norm{T_n}
		\leq
		1+K_\alpha
		\sum_{j=1}^{2n-1}
		|e^{i\theta_{n,j}}-e^{i\theta_{n,j+1}}|
		\leq
		1+K_\alpha\theta_{n,1}
		\leq
		1+\frac{\pi}{4}K_\alpha.
		\]
		Hence
		\[
		H:=\left(\bigoplus_{n\geq1}H_n\right)_{\ell^2},
		\qquad
		T:=\bigoplus_{n\geq1}T_n,
		\]
		defines a bounded operator on a separable Hilbert space.
		
		We first prove that $T$ is uniformly Kreiss bounded. Fix $m,n\geq1$ and
		$\zeta=e^{i\varphi}\in\T$. On the basis $(z_{n,j})_{j=1}^{2n}$,
		\[
		\frac1m\sum_{k=1}^{m}(\zeta T_n)^kz_{n,j}
		=
		F_m(\varphi+\theta_{n,j})z_{n,j}.
		\]
		Thus this Ces\`aro mean is the multiplier $\Delta_{n,\mathbf{a}}$ with
		\[
		a_j=F_m(\varphi+\theta_{n,j}).
		\]
		By \eqref{eq:summation-by-parts}, \eqref{eq:partial-sums} and
		Lemma~\ref{lem:lacunary-variation},
		\[
		\left\|
		\frac1m\sum_{k=1}^{m}(\zeta T_n)^k
		\right\|
		\leq
		1+K_\alpha
		\sum_{j=1}^{2n-1}
		|F_m(\varphi+\theta_{n,j})
		-F_m(\varphi+\theta_{n,j+1})|
		\leq
		1+K_\alpha V_q.
		\]
		Since the direct sum is orthogonal, the same estimate holds for $T$.
		Taking the supremum over $m\geq1$ and $\zeta\in\T$ and using
		\eqref{eq:UKB-characterization}, we conclude that $T$ is uniformly Kreiss
		bounded.
		
		It remains to prove that $T$ is not strongly Ces\`aro bounded. Fix $n\geq1$,
		put $d:=2n$ and let $\eps_j:=(-1)^j$. Apply
		Lemma~\ref{lem:phase-selection} to obtain $\Gamma:\T^d\to\T$ and $c_d$
		satisfying \eqref{eq:phase-Fourier} and \eqref{eq:cd-lower}. By a classical
		result of Weyl, see for instance \cite[p.~209]{Colzani}, \eqref{eq:rational-independence} implies that
		\[
		k\longmapsto
		\bigl(e^{ik\theta_{n,1}},\ldots,e^{ik\theta_{n,d}}\bigr),
		\qquad k\geq0,
		\]
		is equidistributed in $\T^d$. Since $w\mapsto\Gamma(w)w_j$ is Riemann
		integrable, it follows that
		\[
		\frac1M\sum_{k=0}^{M-1}
		\Gamma\bigl(e^{ik\theta_{n,1}},\ldots,e^{ik\theta_{n,d}}\bigr)
		e^{ik\theta_{n,j}}
		\longrightarrow
		\int_{\T^d}\Gamma(w)w_j\,dm_d(w)
		=
		c_d(-1)^j
		\]
		for every $1\leq j\leq 2n$.
		
		Set
		\[
		\gamma_k^{(n)}
		:=
		\Gamma\bigl(e^{ik\theta_{n,1}},\ldots,e^{ik\theta_{n,d}}\bigr),
		\qquad k\geq0.
		\]
		Then $|\gamma_k^{(n)}|=1$. Since there are only finitely many indices
		$1\leq j\leq2n$, we may choose $M_n\geq1$ such that
		\begin{equation}\label{eq:sign-approximation}
			\max_{1\leq j\leq2n}
			\left|
			\frac1{M_n}\sum_{k=0}^{M_n-1}
			\gamma_k^{(n)}e^{ik\theta_{n,j}}
			-c_{2n}(-1)^j
			\right|
			\leq\frac{c_{2n}}{16}.
		\end{equation}
		
		Let
		\[
		A_n
		:=
		\frac1{M_n}\sum_{k=0}^{M_n-1}\gamma_k^{(n)}T_n^k.
		\]
		For $1\leq j\leq2n$, set
		\[
		r_{n,j}
		:=
		\frac1{M_n}\sum_{k=0}^{M_n-1}
		\gamma_k^{(n)}e^{ik\theta_{n,j}}
		-c_{2n}(-1)^j,
		\]
		and
		\[
		R_n:=\Delta_{n,\mathbf{r}_n},
		\qquad
		\mathbf{r}_n=(r_{n,j})_{j=1}^{2n}.
		\]
		Then
		\[
		A_n=c_{2n}\Delta_{n,\pm}+R_n,
		\]
		and \eqref{eq:sign-approximation} gives
		\[
		\max_{1\leq j\leq2n}|r_{n,j}|
		\leq\frac{c_{2n}}{16}.
		\]
		Hence, by \eqref{eq:general-multiplier},
		\[
		\norm{R_n}
		\leq
		\frac14c_{2n}n^\alpha.
		\]
		Using \eqref{eq:alternating-multiplier}, we therefore obtain
		\[
		\norm{A_n}
		\geq
		c_{2n}\norm{\Delta_{n,\pm}}-\norm{R_n}
		\geq
		\frac14c_{2n}n^\alpha.
		\]
		Since \eqref{eq:cd-lower} with $d=2n$ gives
		\[
		c_{2n}\geq\frac1{2\sqrt n},
		\]
		we obtain
		\begin{equation}\label{eq:large-Cesaro}
			\norm{A_n}
			\geq
			\frac18n^{\alpha-\frac12}
			\longrightarrow\infty.
		\end{equation}
		
		Finally, define
		\[
		\widetilde A_n
		:=
		\frac1{M_n}\sum_{k=0}^{M_n-1}\gamma_k^{(n)}T^k
		\in B(H).
		\]
		Since $H_n$ is invariant under $T$ and $T^k|_{H_n}=T_n^k$, we have
		\[
		\widetilde A_n|_{H_n}=A_n.
		\]
		Consequently,
		\[
		\norm{\widetilde A_n}
		\geq
		\norm{A_n}
		\longrightarrow\infty.
		\]
		This contradicts \eqref{eq:SCB-characterization}, applied with
		$N=M_n$ and the unimodular sequence $(\gamma_k^{(n)})_{k\geq0}$.
		Therefore $T$ is not strongly Ces\`aro bounded.
	\end{proof}
	
	\appendix
	
	\section{Proof of Lemma~\ref{lem:lacunary-variation}}
	
	\begin{proof}
		There is nothing to prove when $N=1$, so assume $N\geq2$. Set
		\[
		D_m(x):=\frac1m\sum_{k=0}^{m-1}e^{ikx},
		\qquad
		F_m(x)=e^{ix}D_m(x),
		\]
		and write
		\[
		\delta(x):=\operatorname{dist}(x,2\pi\Z).
		\]
		By periodicity, it suffices to consider $\varphi\in[-\pi,\pi]$.
		
		For $0<|x|\leq\pi$,
		\[
		D_m(x)
		=
		\frac{1-e^{imx}}{m(1-e^{ix})},
		\qquad
		|1-e^{ix}|
		\geq
		\frac{2}{\pi}|x|.
		\]
		Hence, by periodicity,
		\begin{equation}\label{eq:Dm-size}
			|D_m(x)|
			\leq
			\min\left\{
			1,\frac{\pi}{m\delta(x)}
			\right\},
			\qquad
			\delta(x)>0.
		\end{equation}
		Moreover,
		\[
		D_m'(x)
		=
		\frac{i}{m}\sum_{k=0}^{m-1}ke^{ikx}
		\]
		gives
		\begin{equation}\label{eq:Dm-derivative-trivial}
			|D_m'(x)|\leq\frac m2.
		\end{equation}
		Differentiating the quotient formula also yields, for $0<|x|\leq\pi$,
		\[
		|D_m'(x)|
		\leq
		\frac{\pi}{2|x|}
		+
		\frac{\pi^2}{2m|x|^2}.
		\]
		Thus, for an absolute constant $C>0$,
		\begin{equation}\label{eq:Dm-derivative}
			|D_m'(x)|
			\leq
			\frac{C}{\delta(x)},
			\qquad
			m\delta(x)\geq1.
		\end{equation}
		
		We shall repeatedly use
		\begin{equation}\label{eq:reciprocal-lacunarity}
			\sum_{j=r}^{s}\frac1{\theta_j}
			\leq
			\frac1{1-q}\frac1{\theta_s},
			\qquad
			1\leq r\leq s\leq N.
		\end{equation}
		
		We first record two simple variation estimates. Suppose that $r\leq s$ and
		\begin{equation}\label{eq:distance-assumption}
			\delta(\varphi+\theta_j)
			\geq
			a\theta_j,
			\qquad r\leq j\leq s,
		\end{equation}
		for some $a>0$. Then
		\begin{equation}\label{eq:separated-variation}
			\sum_{j=r}^{s-1}
			|D_m(\varphi+\theta_{j+1})
			-D_m(\varphi+\theta_j)|
			\leq
			C\left(1+\frac1{a(1-q)}\right).
		\end{equation}
		Indeed, if all $\theta_j$ in the block are at least $m^{-1}$, then
		\eqref{eq:Dm-size}, \eqref{eq:distance-assumption} and
		\eqref{eq:reciprocal-lacunarity} give
		\[
		\sum_{j=r}^{s-1}
		|D_m(\varphi+\theta_{j+1})
		-D_m(\varphi+\theta_j)|
		\leq
		\frac{2\pi}{am}
		\sum_{j=r}^{s}\frac1{\theta_j}
		\leq
		\frac{2\pi}{a(1-q)}.
		\]
		If all $\theta_j<m^{-1}$, then \eqref{eq:Dm-derivative-trivial} gives
		\[
		\sum_{j=r}^{s-1}
		|D_m(\varphi+\theta_{j+1})
		-D_m(\varphi+\theta_j)|
		\leq
		\frac m2(\theta_r-\theta_s)
		\leq
		\frac12.
		\]
		In the remaining case, split at the unique transition from
		$\theta_j\geq m^{-1}$ to $\theta_j<m^{-1}$. The two parts are estimated as
		above, while the transition contributes at most $2$. This proves
		\eqref{eq:separated-variation}.
		
		Next, let $0<c\leq\pi$ and suppose
		\[
		0<\theta_s<\cdots<\theta_r\leq qc.
		\]
		Then
		\begin{equation}\label{eq:tail-variation}
			\sum_{j=r}^{s-1}
			|D_m(\theta_{j+1}-c)-D_m(\theta_j-c)|
			\leq C_q,
		\end{equation}
		where $C_q$ depends only on $q$. Indeed, the discrete variation is bounded
		by the total variation of $t\mapsto D_m(t-c)$ on $[0,qc]$. If
		$m(1-q)c<1$, then \eqref{eq:Dm-derivative-trivial} gives
		\[
		\int_0^{qc}|D_m'(t-c)|\,dt
		\leq
		\frac m2qc
		\leq
		\frac{q}{2(1-q)}.
		\]
		If $m(1-q)c\geq1$, then \eqref{eq:Dm-derivative} applies throughout
		$[0,qc]$ and yields
		\[
		\int_0^{qc}|D_m'(t-c)|\,dt
		\leq
		C\int_0^{qc}\frac{dt}{c-t}
		=
		C\log\frac1{1-q}.
		\]
		This proves \eqref{eq:tail-variation}.
		
		Set $L:=\theta_1$. We now estimate the variation of $D_m$ along the entire
		sequence.
		
		Suppose first that $\varphi\geq0$. Since
		$\varphi\in[0,\pi]$ and $0<\theta_j\leq\pi/4$,
		\[
		\delta(\varphi+\theta_j)\geq\theta_j,
		\qquad 1\leq j\leq N.
		\]
		Hence \eqref{eq:separated-variation} applies with $a=1$.
		
		Assume next that $\varphi\leq-L$ and put
		\[
		c:=-\varphi\in[L,\pi].
		\]
		Then
		\[
		\theta_2\leq q\theta_1\leq qc.
		\]
		The first increment is bounded by $2$, while
		\eqref{eq:tail-variation}, applied to $\theta_2,\ldots,\theta_N$, controls
		all remaining increments.
		
		It remains to consider
		\[
		-L<\varphi<0.
		\]
		Put $c:=-\varphi\in(0,L)$ and let
		\[
		r:=\max\{j\in\{1,\ldots,N\}:\theta_j>c\}.
		\]
		Since $c<L=\theta_1$, one has $1\leq r\leq N$. For every $j\leq r-1$,
		\[
		c<\theta_{j+1}\leq q\theta_j,
		\]
		and therefore
		\[
		\delta(\varphi+\theta_j)
		=
		\theta_j-c
		\geq
		(1-q)\theta_j.
		\]
		Thus \eqref{eq:separated-variation}, with $a=1-q$, controls the increments
		with indices $1\leq j\leq r-2$.
		
		The increments whose indices belong to
		\[
		\{r-1,r,r+1\}\cap\{1,\ldots,N-1\}
		\]
		are bounded trivially by $2$ each. Finally, whenever $r+2\leq N$,
		maximality of $r$ gives $\theta_{r+1}\leq c$ and hence
		\[
		\theta_{r+2}
		\leq
		q\theta_{r+1}
		\leq
		qc.
		\]
		Therefore \eqref{eq:tail-variation}, applied to
		$\theta_{r+2},\ldots,\theta_N$, controls the remaining tail. We have thus
		proved
		\begin{equation}\label{eq:Dm-variation}
			\sup_{\varphi\in\R}
			\sum_{j=1}^{N-1}
			|D_m(\varphi+\theta_{j+1})
			-D_m(\varphi+\theta_j)|
			\leq
			C_q.
		\end{equation}
		
		Since $F_m(x)=e^{ix}D_m(x)$ and $|D_m|\leq1$,
		\begin{align*}
			|F_m(\varphi+\theta_{j+1})
			-F_m(\varphi+\theta_j)|
			&\leq
			|D_m(\varphi+\theta_{j+1})
			-D_m(\varphi+\theta_j)| \\
			&\quad+
			|e^{i\theta_{j+1}}-e^{i\theta_j}|.
		\end{align*}
		Moreover,
		\[
		|e^{i\theta_{j+1}}-e^{i\theta_j}|
		\leq
		\theta_j-\theta_{j+1},
		\]
		and therefore
		\[
		\sum_{j=1}^{N-1}
		|e^{i\theta_{j+1}}-e^{i\theta_j}|
		\leq
		\theta_1-\theta_N
		\leq
		\frac{\pi}{4}.
		\]
		Combining this with \eqref{eq:Dm-variation} proves
		\eqref{eq:lacunary-variation}.
	\end{proof}
	
	\section{Proof of Lemma~\ref{lem:phase-selection}}
	
	\begin{proof}
		Set
		\[
		S(w):=\sum_{j=1}^d\eps_jw_j
		\]
		and define
		\[
		\Gamma(w):=
		\begin{cases}
			\overline{S(w)}/|S(w)|,& S(w)\neq0,\\
			1,& S(w)=0.
		\end{cases}
		\]
		Then $|\Gamma|=1$. The zero set
		\[
		\mathcal N
		:=
		\left\{
		w\in\T^d:
		\sum_{j=1}^d\eps_jw_j=0
		\right\}
		\]
		has Haar measure zero. Indeed, after fixing $w_2,\ldots,w_d$, the defining
		equation admits at most one value of $w_1$, so the assertion follows from
		Fubini's theorem.
		
		For $1\leq j\leq d$, make the change of variables
		\[
		v_\ell=\eps_\ell w_\ell,
		\qquad 1\leq\ell\leq d.
		\]
		Since this transformation preserves Haar measure,
		\[
		\int_{\T^d}\Gamma(w)w_j\,dm_d(w)
		=
		\eps_j
		\int_{\T^d}
		\frac{\overline{\sum_{\ell=1}^dv_\ell}}
		{\left|\sum_{\ell=1}^dv_\ell\right|}
		v_j\,dm_d(v).
		\]
		Permutation invariance of Haar measure shows that the integral on the
		right is independent of $j$, denote its value by $c_d$. Thus
		\[
		\int_{\T^d}\Gamma(w)w_j\,dm_d(w)
		=
		c_d\eps_j.
		\]
		Multiplying by $\eps_j$, summing over $j$ and using the definition of
		$\Gamma$, we obtain
		\[
		dc_d
		=
		\int_{\T^d}|S(w)|\,dm_d(w).
		\]
		After the same change of variables,
		\begin{equation}\label{eq:cd-formula}
			c_d
			=
			\frac1d
			\int_{\T^d}
			\left|\sum_{j=1}^dw_j\right|
			\,dm_d(w).
		\end{equation}
		
		Set
		\[
		F(w):=\sum_{j=1}^dw_j.
		\]
		By orthogonality,
		\[
		\norm{F}_{L^2(\T^d)}^2=d,
		\qquad
		\norm{F}_{L^4(\T^d)}^4=2d^2-d,
		\]
		the latter identity following since only the terms with
		$\{i,j\}=\{k,\ell\}$ survive in the expansion of $\norm{F}_4^4$.
		Interpolation between $L^1$ and $L^4$ gives
		\[
		\norm{F}_{L^2(\T^d)}
		\leq
		\norm{F}_{L^1(\T^d)}^{1/3}\norm{F}_{L^4(\T^d)}^{2/3},
		\]
		and hence
		\[
		\norm{F}_{L^1(\T^d)}
		\geq
		\frac{\norm{F}_{L^2(\T^d)}^3}{\norm{F}_{L^4(\T^d)}^2}
		=
		\frac{d^{3/2}}{\sqrt{2d^2-d}}
		\geq
		\sqrt{\frac d2}.
		\]
		Together with \eqref{eq:cd-formula}, this proves
		\[
		c_d\geq\frac1{\sqrt{2d}}.
		\]
		
		Finally, $\Gamma$ is continuous on $\T^d\setminus\mathcal N$. Since
		$m_d(\mathcal N)=0$ and $\Gamma(w)w_j$ is bounded, the Lebesgue criterion
		for Riemann integrability shows that $w\mapsto\Gamma(w)w_j$ is Riemann
		integrable for every $j$.
	\end{proof}
	
	\section*{Declaration on the use of generative AI}
	
	During the preparation of this work the author used GPT-5.6 Sol by OpenAI in order to assist in the development and refinement of certain mathematical arguments. After using this tool, the author thoroughly reviewed, verified and edited the content as needed and takes full responsibility for the content of the published article.

\end{document}